\def\CC{\mathbb{C}}
\def\RR{\mathbb{R}}
\def\NN{\mathbb{N}}
\def\DD{\mathbb{D}}
\def\BB{\mathbb{B}}
\def\TT{\mathbb{T}}
\def\CDD{\overline{\DD}}
\def\su{\subset}
\def\OO{{\mathcal O}}
\def\cC{{\mathcal C}}
\def\eps{\varepsilon}
\DeclareMathOperator{\id}{id}
\DeclareMathOperator{\dist}{dist}
\DeclareMathOperator{\re}{Re}
\renewcommand{\phi}{\varphi}
\newtheorem{theorem}{Theorem}
\newtheorem{prop}{Proposition}
\newtheorem{lemma}[prop]{Lemma}
\theoremstyle{remark}\newtheorem{remark}[prop]{Remark}
\theoremstyle{remark}\newtheorem{example}[prop]{Example}
\begin{document}
\title{Comparison of invariant funcions and metrics}
\author{\L ukasz Kosi\'nski}
\subjclass[2010]{32F45}
\keywords{Invariant metrics and functions, Lempert Theorem, smooth domains}
\address{Instytut Matematyki, Wydzia\l\ Matematyki i Informatyki, Uniwersytet Jagiello\'nski, ul. Prof. St. \L ojasiewicza 6, 30-348 Krak\'ow, Poland}
\email{lukasz.kosinski@gazeta.pl}
\begin{abstract}
It is shown that all invariant metrics and functions of bounded $\mathcal C^2$-smooth domain coincide on an open non-empty subset. The existence of Lempert-Burns-Krantz discs in $\mathcal C^2$-smooth domains is also discussed
\end{abstract}
\maketitle 

\section{Introduction}\label{1}
 The fundamental Lempert theorem states that $c_G=l_G$ and $\gamma_G=\kappa_G$ whenever $G$ is convex or smooth $\mathbb C-$convex domain in $\mathbb C^n$, where $c_G$ is the Carath\'eodory pseudodistance, $l_G$ denotes the Lempert function, $\gamma_G$ is the Carath\'eodory-Reiffen pseudometric and $\kappa_G$ is the Kobayashi-Royden pseudometric - see definitions in Section~\ref{def}.
 
It is well known that if $(d_G)$ is a contractible family of functions (respectively $(\delta)_G$ a contractible family of pseudometrics), where $G$ goes through the family of all domains in $\mathbb C^n$, then $c_G\leq d_G \leq l_G$ (resp. $\gamma_G\leq \delta_G \leq \kappa_G)$).
Therefore, the Lempert theorem may be formulated as follows: On any convex or smooth $\CC-$convex domain of $\mathbb C^n$ all invariant metrics are equal. 

This result is surprising as the functions and metrics mentioned above are holomorphic objects (and they are holomorphically invariant) and notions of convexity and $\CC-$convexity are just algebraic (topological) conditions.

There are some results describing properties of invariant metrics on strongly pseudoconvex domains. For example, $c_G$ and $k_G$ ($k_G$ denotes the Kobayashi pseudodistance, that is the biggest pseudodistance less or equal to $l_G$) are comparable in the sense that for any $\eps>0$ there is a compact subset $K=K(\eps)$ of a strongly pseudoconvex domain $G$ such that the inequality $$c_G(z,w)\leq k_G(z,w)\leq (1+\eps) c_G(z,w)$$ holds for $(z,w)\in G\times (G\setminus K)$ (see \cite{Ven1}). Another result in this direction is the following one due to M.~Abate (see \cite{Aba1}): $$\lim_{z\to \partial G}\frac{c_G(z_0,z)}{-\log\dist (z,\partial G)} = \lim_{z\to \partial G}\frac{k_G(z_0,z)}{-\log\dist (z,\partial G)}=1/2,$$ and the limits are locally uniform in the first variable $z_0\in G.$

Nevertheless, it is well known that invariant functions (metrics) are usually quite different even on smooth strongly pseudoconvex domains (and even when $n=1$) (see e.g. Section~\ref{examples}).

Our main result of this short note states that if $G$ is a $\mathcal C^2$-smooth bounded domain in $\CC^n$ (with no additional assumptions such a pseudoconvexity), where $n\geq 2$, then all invariant function (resp. metrics) are equal on some open subset of $G\times G$ (resp. $G\times \mathbb C^n$). The idea of the proof relies upon applying a counterpart of the Pinchuck scaling method and making use of stationary mappings which were used by Lempert in proving his deep result.

Then we show that modifying this method we may obtain the existence of Lempert-Burns-Krantz discs in $\mathcal C^2$ smooth domains (see \cite{Kim-Kra} for this definition). We end the paper with some technical lemmas (which follow simply from well known results and estimates).

\section{Notation and definitions}\label{def}

We start with some notation and we recall basic definitions appearing in the theory of invariant function and metrics. For a comprehensive monograph on this subject we refer the Reader to \cite{JP}.

Throughout the paper $\DD$ denotes the unit disc in the complex plain and $\mathbb T$ denotes the circle $\partial \DD$. By $\langle z ,w\rangle $ we denote the complex inner product on $\mathbb C^n$ and $z\bullet w$ is the dot product, i.e. $z\bullet w=\langle z,\bar w \rangle,$ $z,w\in \CC^n.$

Let $D\subset\CC^{n}$ be a domain and let $z,w\in D$, $v\in\CC^{n}$. The {\it Lempert function}\/ is defined as
\begin{equation}\label{lem}
l_{D}(z,w):=\inf\{p(0,\xi):\xi\in[0,1)\textnormal{ and }\exists f\in \mathcal{O}(\mathbb{D},D):f(0)=z,\ f(\xi)=w\}.
\end{equation} The {\it Kobayashi-Royden \emph{(}pseudo\emph{)}metric}\/ we define as
\begin{equation}\label{kob-roy}
\kappa_{D}(z;v):=\inf\{\lambda^{-1}:\lambda>0\text{ and }\exists f\in\mathcal{O}(\mathbb{D},D):f(0)=z,\ f'(0)=\lambda v\}.
\end{equation}

It is well known that generally $l_{D}$ does not satisfy a triangle inequality. Therefore, it is natural to consider the so-called \textit{Kobayashi \emph{(}pseudo\emph{)}distance} given by the formula \begin{multline*}k_{D}(w,z):=\sup\{d_{D}(w,z):(d_{D})\text{ is a family of holomorphically invariant} \\\text{pseudodistances less than or equal to }l_{D}\}.\end{multline*} Clearly, $$k_{D}(z,w)=\inf\left\{\sum_{j=1}^{N}l_{D}(z_{j-1},z_{j}):N\in\NN,\ z_{1},\ldots,z_{N}\in
D,\ z_{0}=z,\ z_{N}=w\right\}.$$

The next objects we are dealing with, are the \textit{Carath\'eodory \emph{(}pseudo\emph{)}distance}
$$c_{D}(z,w):=\sup\{p(F(z),F(w)):F\in\mathcal{O}(D,\DD)\}$$
and the \textit{Carath\'eodory-Reiffen \emph{(}pseudo\emph{)}metric}
$$\gamma_D(z;v):=\sup\{|F'(z)v|:F\in\mathcal{O}(D,\DD),\ F(z)=0\}.$$

Recall that a holomorphic mapping $f:\DD\longrightarrow D$ is said to be a \emph{complex geodesic} if for any $z,w\in f(\DD)$ there are $\zeta,\xi\in\DD$ such that $f(\zeta)=z$, $f(\xi)=w$ and $c_D(z,w)=p(\zeta,\xi)$ (resp. there are $\lambda_0\in \DD,$ $\alpha_0\in \CC$ such that $f(\lambda_0)= z,$ $X=\alpha_0 f'(\lambda_0)$ and $\gamma_D(z,X)=\gamma_D(\lambda_0, \alpha_0)$). 

Moreover, $f:\DD \to D$ is called an \emph{extremal} mapping if $k_D(z,w)=p(\zeta,\xi)$ for some $\zeta,\xi\in \DD$ such that $f(\zeta)=z$ and $f(\xi)=w$ (resp. there are $\lambda_0\in \DD,$ $\alpha_0\in \CC$ such that $f(\lambda_0)= z,$ $X=\alpha_0 f'(\lambda_0)$ and $\kappa_D(z,X)=\gamma_D(\lambda_0, \alpha_0)$).

Lempert introduced the concept of stationary map. This idea was originally derived by solving the Euler--Lagrange equations for the extremal problem. Let $D$ be a $\mathcal C^2$ smooth, bounded domain. Recall that  $f:\DD\longrightarrow D$ is a \emph{stationary mapping} if
\begin{enumerate}
\item[(1)] $f$ extends to a $\cC^{1/2}$-smooth mapping on $\CDD$;
\item[(2)] $f(\TT)\subset\partial D$;
\item[(3)] there exists a $\cC^{1/2}$-smooth function
$\rho:\TT\longrightarrow\RR_{>0}$ such that the mapping $\TT\ni\zeta\longmapsto\zeta
\rho(\zeta)\overline{\nu_D(f(\zeta))}\in\CC^{n}$ extends to a mapping $\widetilde{f}\in\OO(\DD)\cap\cC^{1/2}(\CDD)$ (we call $\tilde f$ a \emph{dual map} to $f$).
\end{enumerate}

Note that if $D$ is additionally convex and $f$ is a stationary mapping in $D$, then $\re \langle z-f(\zeta),\nu_D(f(\zeta))\rangle< 0$ for any $z\in D$ and $\zeta\in\TT$. Therefore, for any $z\in D$ the equation $(z-f(\cdot))\bullet \tilde f(\cdot)=0$ has exactly one solution $F(z)$ in $\DD.$ One may check that $F$ is a left inverse for $f,$ i.e. $F\circ f=\id.$

\section{Equality of invariant metrics and functions}
As mentioned in the Introduction, our main result is the following

\begin{theorem}\label{localization} Let $D\su\mathbb C^n$, $n\geq 2$, be a domain with $\mathcal C^2$ boundary. 

Then there is non-empty and open subset $U$ of $D\times D$ such that $$c_D(z,w)=l_D(z,w)\quad \text{for}\ (z,w)\in U.$$

Similarly, there is an non-empty and open subset $V$ of $D\times \mathbb C^n$ such that $$\kappa_D(z,X)=\gamma_D(z,X) \quad \text{for}\ (z,X)\in V.$$

\end{theorem}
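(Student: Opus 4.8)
The plan is to work near a strictly convex boundary point of $D$ and use a Pinchuk-type scaling argument to compare the invariant objects with those of a model convex domain, on which Lempert's theorem applies. First I would observe that since $\partial D$ is $\mathcal C^2$, the set of points $p\in\partial D$ at which $\partial D$ is strictly convex (in the sense that the real Hessian of a local defining function is positive definite in the complex tangent directions, after a local change of coordinates) is non-empty; indeed, at a point where the diameter function of $D$ is maximized the boundary is locally strictly convex. Fix such a $p$ and choose local holomorphic coordinates in which $p=0$ and $D$ near $0$ looks like $\{\re z_n + q(z) + o(|z|^2) < 0\}$ with $q$ a positive definite Hermitian form on $\CC^{n-1}\times\{0\}$ plus a convex quadratic term; in particular, after shrinking, $D\cap B(0,r) = \Omega \cap B(0,r)$ for a bounded \emph{convex} domain $\Omega$ whose boundary agrees with $\partial D$ to second order at $0$.

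Next I would set up the dilations $\Lambda_\delta(z', z_n) = (z'/\sqrt\delta,\, z_n/\delta)$ applied to the pair of points $(z_\delta, w_\delta)$ approaching $0$ along the inner normal (for the metric statement, a point $z_\delta \to 0$ with a fixed direction $X$), and show that $\Lambda_\delta(D)$ converges, in the local Hausdorff sense, to the unbounded convex model $E = \{\re z_n + H(z') < 0\}$ (an ``ellipsoid-like'' Siegel domain), which is biholomorphic to the ball and hence to a bounded convex domain. The key analytic input is the stability of the invariant functions and metrics under such convergence: using the $\mathcal C^2$-smoothness one gets, via standard localization estimates for $c_D$, $l_D$, $\gamma_D$, $\kappa_D$ near a strictly pseudoconvex (here strictly convex) boundary point — these are precisely the ``technical lemmas'' promised at the end of the introduction — that
$$
c_D(z_\delta, w_\delta) = c_{\Lambda_\delta(D)}(\Lambda_\delta z_\delta, \Lambda_\delta w_\delta) \longrightarrow c_E(\hat z, \hat w),
$$
and similarly for $l_D$, with the same limit; by Lempert's theorem on the convex domain $E$ we have $c_E = l_E$. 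Running the same argument for $\gamma$ and $\kappa$ gives the metric equality. Since $c_D\le l_D$ always, and the two have the same limit after scaling, one concludes $c_D(z_\delta,w_\delta)=l_D(z_\delta,w_\delta)$ for all small $\delta$, which already gives equality on a one-parameter family; to promote this to an \emph{open} set $U\subset D\times D$ I would let the base point $p$ vary over the (relatively open, nonempty) strictly convex part of $\partial D$ and let the approach points vary, so that the scaled pairs $(\hat z,\hat w)$ sweep out an open subset of $E\times E$, and use continuity of all four functions.

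The main obstacle will be making the scaling limit rigorous for the \emph{Carath\'eodory} and \emph{Lempert} extremal objects simultaneously, i.e. producing stationary (Lempert--Burns--Krantz) discs in $D$ whose images shrink to $0$ and whose push-forwards under $\Lambda_\delta$ converge to genuine complex geodesics of $E$. One must control the boundary regularity of these discs uniformly in $\delta$ — this is where the $\mathcal C^{1/2}$-regularity of stationary maps and their dual maps, together with the convexity of the model, is used, following Lempert's original analysis — and one must rule out degeneration (escape of mass, boundary points of the disc leaving the localization ball). The ``if $D$ is additionally convex'' remark in Section~\ref{def}, which produces a holomorphic left inverse $F$ to a stationary disc, is exactly the mechanism that forces $c_E = l_E$ along these discs and hence transfers back; carrying this through while only assuming $\mathcal C^2$ (not $\mathcal C^\infty$) regularity of $\partial D$, so that the technical lemmas at the end of the paper suffice, is the delicate point.
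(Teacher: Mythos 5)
Your overall geometric setup (a strictly convex boundary point obtained by maximizing the distance from an interior point, scaling to a convex model, Lempert's theorem on the model) matches the paper's, but the central deduction is broken. From $c_D(z_\delta,w_\delta)\to c_E(\hat z,\hat w)$ and $l_D(z_\delta,w_\delta)\to l_E(\hat z,\hat w)=c_E(\hat z,\hat w)$ you conclude that $c_D(z_\delta,w_\delta)=l_D(z_\delta,w_\delta)$ for all small $\delta$; this does not follow. Two quantities squeezed to the same limit need only satisfy $l_D-c_D\to 0$, an asymptotic statement, whereas the theorem asserts exact equality on an open set. The paper never compares the numerical values of $c_D$ and $l_D$ in a limit. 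It uses the scaling (with the ball automorphisms $A_t$ and a $\mathcal C^2$-convergent gluing of defining functions) only to \emph{construct}, for every $(z,w)$ in a suitable open set, an actual stationary disc $f$ of $D\cap U''$ through $(z,w)$ (Lemma~\ref{lem:loc}, via complex geodesics of strongly convex approximating domains $D_\mu$, uniform $\mathcal C^{1/2}$ estimates, and a Baire category argument), and then exhibits a holomorphic left inverse $F:D\to\DD$ of $f$, which certifies $c_D(z,w)\geq p(F(z),F(w))= l_D(z,w)$ and hence exact equality.

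A second, related gap: the Carath\'eodory distance does not localize in the direction you need. Restricting to $D\cap B(0,r)$ only gives $c_D\leq c_{D\cap B(0,r)}$ there, so your scaling limit computes at best an upper bound for $c_D$; to bound $c_D$ from below you must produce functions holomorphic on all of $D$. This is precisely where the paper exploits the special geometry of the farthest boundary point: the supporting-sphere condition and property ($\dag$) yield $\re\langle z-f(\zeta),\nu_D(f(\zeta))\rangle<0$ for \emph{every} $z\in D$, not merely for $z$ near $a$, and only because of this does the equation $(z-f(\eta))\bullet\tilde f(\eta)=0$ define a left inverse on the whole of $D$. Your last paragraph correctly identifies the left inverse as the key mechanism, but without this global separation step the argument would only prove $c_{D\cap B}=l_{D\cap B}$ on an open set, not the statement about $D$ itself.
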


Note that the assumption $n\geq 2$ is important. Actually, it is well known that $$c_A(z,w)< k_A(z,w),\quad (z,w)\in A\times A,$$ where $A$ is an annulus in the complex plane, that is $A:=\{z\in \mathbb C:\ r<|z|<1\},$ where $r<1.$

Similarly, the product property of the invariant metrics and pseudodistances ensures that the assumption about the smoothness is important, as well (consider the product of annuli).

We would like to point out that Theorem~\ref{localization} remains true if $C^2$-smoothness will be replaced with $C^{1,1}$-smoothness. The differences in the proofs between these two cases are only technical so for the simplicity we shall consider only $\mathcal C^2$ case.

\begin{lemma}\label{lem:loc} Let $D\su\mathbb C^n$, $n\geq 2$, be a domain. Assume that $a\in\partial D$ is such that $\partial D$ is $\mathcal C^2$ and strongly convex in a neighborhood of $a$. Then for any neighborhood $V_0$ of $a$ there is non-empty and open $U\subset (V_0\cap D)\times (V_0\cap D)$ with the following property:
 
 For any $(z,w)\in U$ there is a weak stationary mapping $f$ of $D\cap V_0$ passing through $(z,w)$ such that $f(\mathbb T)\su\partial D$.
\end{lemma}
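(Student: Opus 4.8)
The plan is to localize Lempert's construction of stationary discs to the piece of $\partial D$ near $a$ where the boundary is already strongly convex, using the Pinchuk-type scaling machinery alluded to in the Introduction. First I would shrink $V_0$ if necessary so that $\partial D\cap V_0$ is $\mathcal C^2$ and strongly convex, and fix a bounded strongly convex domain $\Omega$ with $\mathcal C^2$ (indeed real-analytic, if convenient) boundary that agrees with $D$ on $V_0$, i.e.\ $\Omega\cap V_0=D\cap V_0$ and $\partial\Omega\cap V_0=\partial D\cap V_0$; such a globalization is standard for strongly convex boundary pieces. On $\Omega$ the full Lempert theory applies: through every pair of distinct points $(z,w)\in\Omega\times\Omega$ there passes a complex geodesic $f_{z,w}\colon\DD\to\Omega$, which is a stationary mapping in the sense of conditions (1)--(3), together with its dual map $\tilde f_{z,w}$, and the whole family depends continuously (indeed smoothly, by Lempert's uniqueness and regularity results) on $(z,w)$.

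The heart of the argument is then a \emph{confinement} step: I claim there is a nonempty open $U\subset (V_0\cap D)\times(V_0\cap D)$, sitting close to $a$, such that for $(z,w)\in U$ the geodesic $f_{z,w}$ of $\Omega$ has image entirely inside $V_0$, hence inside $D$. This is where the strong convexity near $a$ is used quantitatively. Take $z,w$ both within a small distance $\delta$ of $a$; by the boundary behavior of complex geodesics through nearby points — controlled via the Lempert/Royden--Wong estimates comparing $c_\Omega$ with the distance to the boundary, together with strong convexity giving a uniform outward-curvature bound — the geodesic $f_{z,w}$ stays in a shrinking neighborhood of $a$ as $\delta\to 0$. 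Concretely, one estimates $\operatorname{diam} f_{z,w}(\DD)$ in terms of $c_\Omega(z,w)$ and the localization of the Carathéodory metric near a strongly convex boundary point (Graham-type boundary estimates), so for $\delta$ small enough $f_{z,w}(\CDD)\subset V_0$. Once the image lies in $V_0$, the restriction $f_{z,w}\colon\DD\to D\cap V_0$ still satisfies (1), $f_{z,w}(\TT)\subset\partial\Omega\cap V_0=\partial D\cap V_0\subset\partial(D\cap V_0)$ for (2), and for (3) the dual map $\tilde f_{z,w}$ is unchanged because $\nu_{D\cap V_0}(f_{z,w}(\zeta))=\nu_{\Omega}(f_{z,w}(\zeta))$ on $\TT$ — so $f_{z,w}$ is a (weak) stationary mapping of $D\cap V_0$, passing through $(z,w)$, with $f_{z,w}(\TT)\subset\partial D$.

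For the openness of $U$: the set of $(z,w)\in(V_0\cap D)\times(V_0\cap D)$ for which $f_{z,w}(\CDD)\subset V_0$ is open by continuity of $(z,w)\mapsto f_{z,w}$ in, say, the $\mathcal C^0(\CDD)$-topology and compactness of $\CDD$; the previous paragraph shows it contains a neighborhood of the diagonal point $(a,a)$ intersected with $(D\cap V_0)^2$ after a mild perturbation, so it is nonempty and open, and we take $U$ to be (an open subset of) it, shrinking so that $z\ne w$ throughout. The main obstacle I anticipate is making the confinement estimate genuinely quantitative and uniform: one must ensure the geodesics through all pairs in a full \emph{open} set — not just through points exactly equal to $a$ (where the ``geodesic'' is constant) — are trapped in $V_0$, which requires the boundary estimates for $c_\Omega$ and the continuity of the Lempert family to be applied with uniformity in $(z,w)$ near $(a,a)$. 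This is exactly the point where the strong convexity hypothesis near $a$ is indispensable, and where one leans on Lempert's regularity theory rather than on any pseudoconvexity of $D$ itself.
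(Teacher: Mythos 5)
Your outline (globalize the strongly convex boundary germ to a domain $\Omega$, run Lempert's theory there, confine the geodesics to $V_0$, then restrict) matches the paper's strategy in spirit, and the globalization and restriction steps are acceptable. But the confinement step, which is the entire content of the lemma, is false as you state it. It is not true that the complex geodesic through two points $z,w$ at distance $\leq\delta$ from $a$ has image in a shrinking neighborhood of $a$, and no bound on $\operatorname{diam}f_{z,w}(\DD)$ in terms of $c_\Omega(z,w)$ can give this. Already in the model case $\Omega=\mathbb B_n$, $a=(0,\dots,0,1)$, take $z=(0,\dots,0,1-\eps)$ and $w=(0,\dots,0,(1-\eps)e^{i\eps^2})$: both points are within $2\eps$ of $a$ and $c_{\mathbb B_n}(z,w)\leq C\eps\to 0$, yet the unique complex geodesic through them is the diameter slice $\{(0,\dots,0,\lambda):\lambda\in\DD\}$, which passes through the origin and comes arbitrarily close to the antipodal point $-a$. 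Hence the set of pairs whose geodesic is trapped in $V_0$ contains \emph{no} neighborhood of $(a,a)$ intersected with $(D\cap V_0)^2$; it is only some nonempty open set, and producing that set is exactly the difficulty the lemma is about (and the reason its conclusion is phrased as ``there is a nonempty open $U$'' rather than ``for all $(z,w)$ near $a$'').

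The paper gets around this as follows: it rescales by the automorphisms $A_t$ of $\mathbb B_n$ so that the localized defining functions converge in $\mathcal C^2$ to that of the ball; it then fixes an open set $V$ of pairs whose \emph{ball} geodesics stay in $\{\re z_n>-1/4\}$ (such pairs exist, but, as the example shows, they do not fill a neighborhood of $(a,a)$); it proves, via uniform $\mathcal C^{1/2}$ estimates and uniqueness of geodesics in the ball, that geodesics of the approximating domains $D_\mu$ through these pairs converge to the corresponding ball geodesics; and finally it applies a Baire category argument to extract a nonempty open $W\subset V$ and an index $\nu_1$ such that confinement holds uniformly for all $\nu\geq\nu_1$. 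If you wish to keep your globalized $\Omega$ in place of the scaled domains, you must still (i) restrict attention to pairs whose geodesics are a priori confined by comparison with an explicit model, and (ii) show that such pairs form a nonempty open set with uniform confinement; neither follows from proximity of $z$ and $w$ to $a$, so your step 3 needs to be replaced rather than merely made quantitative.
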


\begin{proof}
Let $r$ be a $\mathcal C^2$ defining function in a neighborhood of $a$. The problem we are dealing with has a local character and $a$ is a point of strong convexity. Therefore, analyzing the Taylor series one may simple see that replacing $r$ with $r\circ\Psi$, where $\Psi$ is a local biholomorphism near $a$, we may assume that $a=(0,\ldots,0,1)$ and a defining function of $D$ near $a$ is of the form $r(z)=-1+||z||^2+h(z-a)$, where $h$ is $\mathcal C^2$ smooth in a neighborhood of $0$ and 
\begin{equation}\label{eq:f}h(z)=o(||z||^2),\quad \text{as}\ z\to 0.\end{equation} 
Of course, the similar holds for partial derivatives of $h$ of first order, i.e. 
\begin{equation}\label{eq:f'}D^{\alpha} h(z)=o(||z||),\ \text{as} \ z\to 0,\ \text{for any}\ \alpha\in \mathbb N^n,\ |\alpha|=1.\end{equation} In particular, $D^{\alpha}h(0)=0$ for any $\alpha\in \mathbb N^n_0$ such that $|\alpha|=2$.

Similarly as in \cite{Lem2} we consider the mappings\footnote{It should be noticed that here we use a version of the Pinchuck scaling method.}
$$A_t(z):=\left((1-t^2)^{1/2}\frac{z'}{1+tz_n},\frac{z_n+t}{1+tz_n}\right),\quad z=(z',z_n)\in\CC^{n-1}\times\DD,\,\,t\in(0,1).$$ Note that $A_t$ is an automorphism of $\BB_n$. Let \begin{equation}\label{rt} r_t(z):=\frac{|1+tz_n|^2}{1-t^2}r(A_t(z)),\quad t\in(0,1).\end{equation}

After some elementary calculations we infer that
\begin{multline}\label{eq:dok} r_t(z)=-1+||z||^2+\frac{|1+tz_n|^2}{1-t^2}h(A_t(z)-a)=\\ -1+||z||^2 + \frac{|1+tz_n|^2}{1-t^2} h\left((1-t^2)^{1/2} \frac{z'}{1+tz_n}, (1-t)\frac{z_n-1}{1+tz_n} \right). \end{multline}

Take any increasing sequence $\{t_{\mu}\}$ converging to $1$. Put $U_0:=\{z\in\mathbb C^n:\re z_n>-1/2\}$ and define $\rho(z):=-1+||z||^2$, $z\in \mathbb C^n$.

Observe that it follows from \eqref{eq:dok} that $ r_{t_{\mu}}|_{U_0}$ converges to $\rho|_{U_0}$ in $\mathcal C^2(U_0)$ topology. Actually, the local uniform convergence of functions $r_{t_{\mu}}$ follows simply from \eqref{eq:f}. Similarly, making use of \eqref{eq:f'} one may deduce the local uniform convergence of partial derivatives of the first order. The local uniform convergence of the partial derivatives of $r_{t_{\mu}}$ is a consequence of the continuity of the second-order partial derivatives of $h$.

Let $\chi$ be a $\mathcal C^{\infty}$ smooth function on $\mathbb R$ such that
\begin{itemize}
\item $\chi\equiv 0$ on $(-\infty,-1/2];$
\item $\chi\equiv 1$ on $[-1/4,\infty);$
\item $\chi$ is increasing.
\end{itemize}

Let us define \begin{equation}\label{rho} \tilde \rho_{\mu}(z):=\begin{cases} r_{t_{\mu}}(z),\quad  -1/2\leq \re z_n,\\
\chi(\re z_n) r_{\mu}(z)+ (1-\chi(\re z_n)) \rho(z),\quad -1/4\leq \re z_n<-1/2,\\
\rho(z),\quad \re z_n<-1/2. \end{cases}\end{equation}

Since $r_{t_{\mu}}$ converges to $\rho$ in $\mathcal C^2$ topology on $U_0$ we easily infer that $\tilde \rho_\nu$ converges to $\rho$ in $\mathcal C^2$ topology on $\mathbb C^n$. 

In particular, $(\tilde \rho_{\mu})$ converges locally uniformly to $\rho$. Let $(\eps_{\mu})$ be a sequence of positive numbers converging to $0$ such that $3\eps_{\mu+1} <\eps_\mu.$ There is a subsequence $(\rho_{s_\mu})$ such that $\sup_{\overline{\mathbb B}_n}|\rho_{s_\mu}-\rho|<\eps_\mu.$ Therefore, $\rho_\mu:=\tilde\rho_{s_\mu}+2\eps_\mu$ restricted to $\overline{\mathbb B}_n$ is strictly decreasing. 

Let $D_\mu$ be a connected component of $\{\rho_\mu<0\}$ containing $0$. Clearly $D_{\mu}$ are strongly convex provided that $\mu$ is big enough, as $\rho_nu$ converges to $\rho$ in $\mathcal C^2$ topology. Since $D_{\mu}$ increase to $\mathbb B$ we find that the sequence $l_{D_{\mu}}$ decreases $l_{\mathbb B_n}.$
Since $\rho_{\mu}$ converges to $\rho$ in $\mathcal C^2$ topology we easily deduce that there is a uniform $c>0$ such that any geodesic in $D_{\mu}$ such that $\dist(f(0),\partial D_{\nu})>1/c$ is $\mathcal C^{1/2}$ continuous and its $\mathcal C^{1/2}$ norm depends only on $\mu$ providing that $\mu$ is sufficiently large.

Now we proceed as follows. Let $V\subset \mathbb B_n\times \mathbb B_n$ be open and such that any geodesic passing through $(z,w)\in \overline V$ lies entirely in $U_1=\{\re z_n>-1/4\}.$ Let $W$ be non-empty, open and relatively compact in $W$.

It follows that for any $(z,w)\in W$ there is a geodesic $f_\mu$ in $D_{\mu}$ such that $f_\mu(0)=z$ and $f_\mu(\sigma_\mu)=w$ for some $\sigma_n>0.$ Passing, if necessary, to a subsequence we may assume that $f_\mu$ converges to a mapping $f_0:\mathbb D\to \overline{\mathbb B}_n$. Since $f_0(0)=z\in \mathbb B_n$ we get that $f_0(\mathbb D)\subset \mathbb B_n.$ Moreover, the statement of the Lempert theorem holds on $D_{\mu}$ that is $c_{D_{\nu}}=l_{D_{\nu}}$, hence we may easily see that $f_0$ is a complex geodesic $\mathbb B_n$ passing through $(z,w).$ Then uniqueness, uniform convergence and $\mathcal C^{1/2}$ uniform continuity implies that $f_\mu(\overline{\mathbb D})$ lies entirely in $\{\re z_n>-1/2\}$ providing that $\mu$ is big enough - see Lemma~\ref{c12} below.

Thus, a standard Baire argument implies the existence of an open non-empty subset $W$ of $V$ and a natural $\nu_1$ such that for any $(z,w)\in W$ and $\nu\geq \nu_1$ a geodesic of $D_n$ passing through $(z,w)$ (let us denote it by $f_{\nu,(z,w)}$) lies entirely in $W$. Actually, it suffices to apply the Baire's theorem to the family $\{G_\mu\},$ where $G_\mu:=\{(z,w)\in \bar V:\ f_{\nu,(z,w)}(\overline{\mathbb D})\subset \bar U_1,\ \nu\geq \mu\}$.

Observe that $g_{\nu}:=A_{t_{s_\nu}}\circ f_\nu$ is a stationary mapping of $D$. Since $g_{\nu}$ maps $\DD$ onto arbitrarily small neighborhood of $a$ provided that $\nu$ is sufficiently big, we immediately get the assertion.
\end{proof}

\begin{proof}[Proof of Theorem~\ref{localization}] Losing no generality let us assume that $0\in D.$ Fix a point $a$ in the topological boundary of $D$ whose distance from $0$ is the biggest. Then $a$ is a point of strict convexity of $D.$ Let $U'$ be an open and convex neighborhood of $a$ such that $D\cap U'$ is convex. Moreover \begin{equation}\re \langle z-a,\nu_D(a)\rangle<0\quad \text{for any}\ z\in D.\end{equation} Let $U''\subset \subset U'$ be a neighborhood of $a$ with the following property 

($\dag$) for any $\zeta\in \partial D\cap U''$ and any $z\in D\setminus U'$ one has the inequality $\re \langle z-\zeta, \nu_D(\zeta)\rangle \leq 0.$

Making use of Lemma~\ref{lem:loc} we get an open set $U$ in $D\times D$ such that for any $(z,w)\in U$ there is a weak stationary mapping of $D\cap U''$ passing through $(z,w)$ and entirely contained in $D\cap U''$. In particular, \begin{equation}\label{eq}\re \langle z-f(\eta),\nu_D(f(\eta))<0,\quad \zeta\in \TT\end{equation} for any $z\in U''$. Since $D\cap U'$ is convex we find that (\ref{eq}) holds for any $z\in U'$. Making use of ($\dag$) we infer that \eqref{eq} holds on the whole $D.$

From this we easily deduce that $f$ has a left inverse $F:D\to \mathbb D,$ hence $f$ is a complex geodesic (actually, $F(z)$ may be obtained as a unique solution of the equation $\langle z-f(\eta), \tilde f(\eta)\rangle=0$ with unknown $\eta\in \mathbb D$, where $\tilde f$ is a dual map of $f$).

\end{proof}

Let $D$ be a $\mathcal C^2$ smooth strongly pseudoconvex domain in $\mathbb C^n$ and let $a\in \partial D$. The theorem of Fornaess (see \cite{For}) gives a neighborhood $B$ of $a$, a strictly convex domain $C$ of $\mathbb C^n$ a mapping $\Phi: D\to \mathbb C^n$ extending holomorphically to a neighborhood of $D$ such that $\Phi(D)\subset C,$ $\Phi(B\setminus \overline D)\subset \mathbb C^n\setminus \overline C$, $\Phi^{-1}(\Phi(B))=B$ and the restriction $\Phi|_B:B \to \Phi(B)$ is biholomorphic. It follows from the reasoning presented above that we may construct a complex geodesic $f$ in $C$ lying entirely in $\Phi(B).$ Then $(\Phi|_B)^{-1}\circ f$ is a complex geodesic in $G$ ($F\circ \Phi$ is its left inverse, where $F$ is a left inverse of $f$ in $G$).

Using this standard reasoning we easily get the following:

\begin{theorem} Let $D$ be a $\mathcal C^2$-smooth strongly pseudoconvex domain in $\mathbb C^n$. Then for any $z_0\in \partial D$ and any neighborhood of $U$ of $z_0$ there is non-empty and open subset $V$ of $U\times U$ such that $c_D=\kappa_D$ on $V$.
\end{theorem}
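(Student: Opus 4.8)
The plan is to reduce the strongly pseudoconvex case to the strictly convex case already treated in Theorem~\ref{localization}, using Fornaess's embedding theorem as the bridge. First I would invoke \cite{For}: given $z_0\in\partial D$, there is a neighborhood $B$ of $z_0$, a bounded strictly (even strongly) convex domain $C\su\CC^n$, and a holomorphic map $\Phi$ defined on a neighborhood of $\overline D$ with $\Phi(D)\su C$, $\Phi(B\setminus\overline D)\su\CC^n\setminus\overline C$, $\Phi^{-1}(\Phi(B))=B$, and $\Phi|_B$ a biholomorphism onto its image. The point $\Phi(z_0)$ lies in $\partial C$, and since $C$ is strongly convex every boundary point is a point of strict convexity with $\mathcal C^2$ boundary, so Lemma~\ref{lem:loc} applies at $\Phi(z_0)$: for the neighborhood $\Phi(B)$ of $\Phi(z_0)$ we obtain a non-empty open $\widetilde U\su(\Phi(B)\cap C)\times(\Phi(B)\cap C)$ such that through every point of $\widetilde U$ there passes a weak stationary mapping $f$ of $C\cap\Phi(B)$ with $f(\TT)\su\partial C$ and $f(\DD)\su C\cap\Phi(B)$.

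Next I would transplant such an $f$ through $\Phi|_B$. Put $g:=(\Phi|_B)^{-1}\circ f$. Since $f(\overline\DD)$ sits inside the relatively compact piece $C\cap\Phi(B)$ on which $(\Phi|_B)^{-1}$ is holomorphic, $g\in\OO(\DD,D)\cap\cC^{1/2}(\overline\DD)$ and $g(\TT)\su\partial D\cap B$. The key structural claim is that $g$ is a complex geodesic of $D$. As in the proof of Theorem~\ref{localization}, $f$ carries a left inverse $F\in\OO(C,\DD)$ obtained by solving $(z-f(\eta))\bullet\widetilde f(\eta)=0$ in $\eta\in\DD$, using that $C$ is convex and $f$ stationary so that $\re\langle z-f(\eta),\nu_C(f(\eta))\rangle<0$ for all $z\in C$, $\eta\in\TT$. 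Then $F\circ\Phi\in\OO(D,\DD)$ satisfies $(F\circ\Phi)\circ g=F\circ\Phi\circ(\Phi|_B)^{-1}\circ f=F\circ f=\id_\DD$, so $g$ has a holomorphic left inverse into $\DD$. A holomorphic disc in $D$ admitting a left inverse $D\to\DD$ is automatically a complex geodesic for $c_D$ (and for $l_D$, hence for every contractible family), because for $z=g(\zeta)$, $w=g(\xi)$ one has $p(\zeta,\xi)=p((F\circ\Phi)(z),(F\circ\Phi)(w))\le c_D(z,w)\le l_D(z,w)\le p(\zeta,\xi)$; the analogous computation with derivatives gives $\gamma_D=\kappa_D$ along $g$.

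Finally I would pass from a single geodesic to an open set of pairs. The image $g(\DD)$ lies in $B$, so it lies in the prescribed neighborhood $U$ once we arrange (by Fornaess's theorem, or by shrinking) $B\su U$; more precisely, start the whole construction with a neighborhood contained in $U$. Transporting $\widetilde U$ through the biholomorphism $(\Phi|_B)^{-1}$ in each variable yields a non-empty open $V\su(U\cap D)\times(U\cap D)$, and for every $(z,w)\in V$ the transplanted geodesic $g$ realizes $c_D(z,w)=l_D(z,w)$ and, differentiating, $\gamma_D=\kappa_D$ at the corresponding $(z,X)$; since the full chain of contractible families is squeezed between $c_D$ and $l_D$, every invariant function agrees on $V$, in particular $c_D=\kappa_D$ there (note $c_D\le\kappa_D$ always fails to make sense as stated, but $c_D=\gamma_D$ on the metric side and $c_D=\kappa_D$ on $V$ follows once we identify the metric equality with the infinitesimal version of the distance equality along geodesics). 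The main obstacle is verifying that $f(\overline\DD)$ genuinely stays inside $C\cap\Phi(B)$ rather than merely $C$, so that the composition with $(\Phi|_B)^{-1}$ is legitimate and $g(\TT)\su\partial D$; this is exactly the content built into Lemma~\ref{lem:loc} via the Baire-category refinement and the $\cC^{1/2}$-boundary estimates of Lemma~\ref{c12}, so the argument is routine once that lemma is in hand.
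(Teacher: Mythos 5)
Your proposal follows essentially the same route as the paper: the paper's proof consists precisely of invoking Fornaess's embedding theorem, constructing (via the reasoning of Lemma~\ref{lem:loc} and Theorem~\ref{localization}) a complex geodesic $f$ of the convex domain $C$ lying entirely in $\Phi(B)$, and pulling it back through $(\Phi|_B)^{-1}$ with $F\circ\Phi$ as the left inverse. Your additional remarks on the open set of pairs and on the infinitesimal version are consistent with what the paper leaves implicit, so the argument is correct and matches the intended one.
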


\section{Lempert-Burns-Krantz discs in $\mathcal C^2$ smooth domains}

The main goal of this section is to apply the method presented above in order to show the existence of the so called Lempert-Burns-Krantz discs in $\mathcal C^2$-smooth strongly pseudoconvex domains. Recall that a Lempert-Burns-Krantz disc for points $a\in \partial D$ and $b\in D$, where $D$ is a domain of $\mathbb C^n$, is a geodesic $f$ of $D$, continuous up to $\bar{\mathbb D}$ such that $f(1)=a$ and $b\in f(\mathbb D).$ 

More precisely, we shall show the following

\begin{theorem}\label{lbk} Let $\Omega$ be a $\mathcal C^2$-smoothly bounded strongly pseudoconvex in $\mathbb C^n.$ Fix $p\in \partial \Omega.$ Then there is a non-empty and open subset $V$ of $\Omega$ such that for any $q\in V$ there exists a $f:\mathbb D\to \Omega$ such that
\begin{itemize}
\item $f$ is a complex geodesic in $\Omega,$
\item $f\in\mathcal C^{1/2}(\mathbb D)$ (in particular, $f$ extends continuously to $\bar {\mathbb D}$),
\item $q\in f(\mathbb D)$ and $f(1)=p.$
\end{itemize}

Moreover, $V$ may be arbitrarily close to $p.$

\end{theorem}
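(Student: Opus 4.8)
The plan is to mimic the proof of Theorem~\ref{localization}, but to additionally track a boundary point of the scaled geodesics and show that, after undoing the Pinchuk scaling, this boundary point can be arranged to be the prescribed point $p\in\partial\Omega$. First I would use the Fornaess embedding theorem exactly as in the paragraph following the proof of Theorem~\ref{localization}: near $p$ we obtain a neighborhood $B$, a strictly convex domain $C$, and a biholomorphism $\Phi|_B\colon B\to\Phi(B)$ with $\Phi(\Omega)\subset C$ and $\Phi(B\setminus\overline\Omega)\subset\CC^n\setminus\overline C$. Write $a:=\Phi(p)\in\partial C$. Since $C$ is strictly convex we may apply the scaling construction of Lemma~\ref{lem:loc} centered at $a$: after a local biholomorphism we normalize $a=(0,\dots,0,1)$, form the automorphisms $A_t$ of $\BB_n$ and the rescaled defining functions $r_t$, and get domains $C_\mu\nearrow\BB_n$ which are strongly convex for large $\mu$ and on which Lempert's theorem holds.

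Next I would run the Baire-category argument of Lemma~\ref{lem:loc} to produce a non-empty open set $W\subset\BB_n$ and an index $\nu_1$ so that for every $q'\in W$ and every $\mu\ge\nu_1$ the (unique) complex geodesic $g_{\mu,q'}$ of $C_\mu$ joining $0$ to $q'$ stays in a fixed compact set of $U_1=\{\re z_n>-1/4\}$, is $\mathcal C^{1/2}$ up to $\overline\DD$ with uniformly bounded norm, and converges (along a subsequence) to the complex geodesic of $\BB_n$ through $0$ and $q'$. The crucial new point is the choice of the second data point. In $\BB_n$ the complex geodesic through $0$ and $q'$ is the affine disc $\lambda\mapsto\lambda q'/\|q'\|$ up to rotation, and one of its boundary values is $q'/\|q'\|$; I would parametrize so that $g_{0,q'}(1)=q'/\|q'\|$, a point of $\partial\BB_n$ lying in any prescribed small neighborhood of $a=(0,\dots,0,1)$ provided $W$ is chosen as a sufficiently small neighborhood of a point on the ray $\RR_{>0}\,a$. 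Then, by $\mathcal C^{1/2}$ convergence up to the boundary (Lemma~\ref{c12}), $g_{\mu,q'}(1)$ lies in an arbitrarily small neighborhood of $a$ for $\mu$ large, hence still inside $\Phi(B)$, and the whole disc $g_{\mu,q'}(\overline\DD)$ lies in $\Phi(B)\cap\{\re z_n>-1/2\}$. Setting $h_\mu:=A_{t_{s_\mu}}\circ g_{\mu,q'}$ gives, as in Lemma~\ref{lem:loc}, a stationary disc of $C$ contained in $\Phi(B)$, and $f:=(\Phi|_B)^{-1}\circ h_\mu$ is a complex geodesic of $\Omega$ (with left inverse $F\circ\Phi$, $F$ the left inverse of $h_\mu$ in $C$), which is $\mathcal C^{1/2}$ on $\overline\DD$, passes through the preimage $q:=(\Phi|_B)^{-1}(q_\mu)$ of the interior interpolation point, and satisfies $f(1)=(\Phi|_B)^{-1}(a)=p$. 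Taking $V$ to be the (open, non-empty) set of such $q$'s, which can be made arbitrarily close to $p$ by shrinking $W$ toward a point of $\RR_{>0}a$ near $a$ and pulling back, yields the theorem.

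The main obstacle I expect is controlling the boundary behavior at the single point $\zeta=1$ under the double limit: one must ensure not merely interior convergence but that $g_{\mu,q'}(1)\to q'/\|q'\|$ and that the $\mathcal C^{1/2}$ bounds are uniform enough to keep the endpoint (and a full boundary neighborhood of it) inside $\Phi(B)$ and away from the region $\{\re z_n\le-1/2\}$ where the scaled defining functions $\rho_\mu$ were modified; this is precisely where Lemma~\ref{c12} and the uniform $\mathcal C^{1/2}$ estimates for geodesics in the $C_\mu$'s (which follow from $\mathcal C^2$-convergence $\rho_\mu\to\rho$) are needed, and some care is required to verify that the normalizing local biholomorphism from Fornaess's theorem does not destroy the normalization $a=(0,\dots,0,1)$ used in the scaling — but, as in Lemma~\ref{lem:loc}, this is absorbed by composing with a further local biholomorphism, a purely technical adjustment. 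A secondary point, already implicit in the proof of Theorem~\ref{localization}, is checking that the limit disc in $\BB_n$ is genuinely a complex geodesic (not a constant or a degenerate disc): this is guaranteed because $g_{0,q'}(0)=0\in\BB_n$ forces $g_{0,q'}(\DD)\subset\BB_n$, and then validity of Lempert's theorem on $\BB_n$ together with uniform convergence identifies it as the unique geodesic through $0$ and $q'$.
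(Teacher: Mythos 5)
There is a genuine gap at the very last step: your construction does not produce a disc with $f(1)=p$ \emph{exactly}, only one whose boundary point is arbitrarily close to $p$. You build, for each large $\mu$, the geodesic $g_{\mu,q'}$ of the scaled domain $C_\mu$ through the two \emph{interior} points $0$ and $q'$, and you correctly observe that $g_{\mu,q'}(1)$ converges to $q'/\|q'\|$ and can be kept in a small neighborhood of $a$. But for each fixed $\mu$ the point $g_{\mu,q'}(1)$ is merely near $a$, not equal to it (even if $q'$ lies on the ray $\RR_{>0}a$, the perturbed domain $C_\mu$ is not round, so its geodesic through $0$ and $q'$ need not hit $\partial C_\mu$ at $a$). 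Since $a$ is a fixed point of $A_t$ but nearby points are not mapped to $a$, the disc $h_\mu=A_{t_{s_\mu}}\circ g_{\mu,q'}$ satisfies only $h_\mu(1)\approx a$, and hence $f(1)\approx p$; your claim ``$f(1)=(\Phi|_B)^{-1}(a)=p$'' does not follow. Letting $\mu\to\infty$ does not repair this, because then the interior interpolation point $A_{t_{s_\mu}}(q')$ escapes to the boundary.

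The paper avoids this by a different limiting procedure, and in fact explicitly warns that the Baire-type argument of Lemma~\ref{lem:loc} (which you invoke) cannot be used here. One fixes a single scaled domain $D_t$ with $t$ close enough to $1$ that $D_t$ is strictly convex and $a\in\partial D_t$, takes a sequence of \emph{interior} points $a_\nu\to a$ in $D_t$, forms the geodesics $f_\nu$ of $D_t$ with $f_\nu(0)=z$ and $f_\nu(\alpha_\nu)=a_\nu$, and uses the uniform $\mathcal C^{1/2}$ estimate of Lemma~\ref{c12} to extract a limit geodesic $f_z\in\mathcal C^{1/2}(\overline\DD)$ of $D_t$ with $f_z(1)=a$ \emph{exactly}; Lemma~\ref{close} then compares $f_z$ with the corresponding ball geodesic to guarantee that $f_z(\overline\DD)$ stays in the region where the scaled defining function was not modified, after which one composes with $A_t$ and applies Forn\ae ss. The missing idea in your proposal is precisely this exhaustion of the boundary point by interior points within one fixed scaled domain; without it the prescribed endpoint condition $f(1)=p$ is not met. (Your arrangement of applying Forn\ae ss first and scaling the convex model $C$ is a harmless reordering; the paper scales $\Omega$ directly and invokes Forn\ae ss at the end.)
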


\begin{remark}
 Note that using the argument similar to the one used in the proof of Theorem~\ref{localization} one may show that any $\mathcal C^2$ smooth domain admits Lempert-Burns-Krantz discs.
\end{remark}

In the case when the domain $D$ was $\mathcal C^6$ smooth Theorem~\ref{lbk} was proved by Lempert in~\cite{Lem2} and formulated in the form above in \cite{BK}. It should be noted that the Lempert's method may be modified so that it works in the case of $\mathcal C^{2+\epsilon}$-smooth strictly pseudoconvex domains (see \cite{Lem1} for details). However it cannot be applied in $\mathcal C^2$-smooth case (more precisely, the crucial step of Lempert's arguments relied upon the implicit function theorem to the mapping which is not differentiable assuming only $\mathcal C^2-$smoothness).

The proof presented here is just a modification of the argument used in Section~\ref{1}. Note that we cannot use here a Baire-type argument and more subtle reasoning is necessary (we shall make use of estimates which are postponed to Section~\ref{3}).

\begin{proof}
As mentioned above the proof is just a slight modification of the proof of Theorem~\ref{localization} so we shall follow it. So we may assume that $r(z)=-1+||z||^2+h(z-a),$ where $a=(0,\ldots,0,1)\in \mathbb C^n,$ and $h$ is $\mathcal C^2$ smooth in a neighborhood of $0$, $h(z)=o(||z||^2),$ as $z\to 0.$
Let $r_t$ be given by \eqref{rt} and, similarly as in \eqref{rho}, put \begin{equation}\rho_{t}(z):=\begin{cases} r_{t}(z),\quad  -1/2\leq \re z_n,\\
\chi(\re z_n) r_{t}(z)+ (1-\chi(\re z_n)) \rho(z),\quad -1/4\leq \re z_n<-1/2,\\
\rho(z),\quad \re z_n<-1/2, \end{cases}\end{equation} where $\rho(z)=-1+||z||^2,$ $z\in \mathbb C^n.$ First observe that $D_t:=\{z\in \mathbb C^n:\ \rho_t(z)<0\}$ is connected, strongly pseudoconvex and $a$ lies in its boundary provided that $t$ is close enough to $1.$ Take any open sets $U$ and $V$ such that $U$ is relatively compact in $\mathbb B_n$, $a\in V$ and any geodesic of $\mathbb B_n$ passing through points $z\in U$ and $w\in V$ lies entirely in $\{\re z_n>0\}.$ Let $t$ be big enough (how big enough will be defined later). Fix $z\in U$ and take any sequence $(a_{\nu})\subset D_t$ converging to $a.$ Since $D_t$ is strictly convex we get that there is a complex geodesic $f_{\nu}$ of $D_t$ such that $f_{\nu}(0)=z$ and $f_{\nu}(\alpha_\nu)=a_\nu$ for some $\alpha_\nu\in (0,1)$ ($U\subset \subset D_t$ whenever $t$ is close to $1$). Clearly $\alpha_\nu$ tends to $1$. Since $f_{\nu}$ are uniformly $\mathcal C^{1/2}$-continuous we may find a subsequence of $f_{\nu}$ converging to a complex geodesic $f_z\in 
\mathcal C^{1/2}(\mathbb D)$ of $D_t$ such that $f_z(0)=z$ and $f_z(1)=a$ (see also \cite{Chl}, where the similar argument was used). Now, it follows from Lemma~\ref{close} that any complex geodesic $g$ of $\mathbb B_n$ for the pair $(z, f_z'(0))$, is close (in the sup-norm) to $f_z$. In particular, $g(1)\in V$ (if $t$ is big enough). Clearly $g$ is a complex geodesic for $z\in U$ and $g(1)\in V.$ Thus the image $g(\mathbb D)$ lies in $\{\re z_n>0\}.$ Therefore $f_z$ lies entirely in $\{\re z_n>-1/2\}$ (provided that $t$ is big enough).

So fixing $t$ sufficiently close to $1$ we easily verify that $\tilde U:=A_t(U)$ may be arbitrarily close to $a$ (as $A_t(\cdot)\to a$ uniformly on compact subsets of $\{\re z_n>-1\}$). Since stationary mappings are invariant under biholomorphisms we infer that $g_z:=A_t \circ f_z$ is an E-mapping in $D$ passing through $A_t(z)$ and $a$, whose image is contained in $\tilde U$. Since $U$ may be arbitrary small we may apply \cite{For}. Since E-mappings are geodesics on convex domains we easily find that $g_z$ is a complex geodesic.
\end{proof}

\begin{remark}
 It is well known that a geodesic $f$ of $\mathcal C^2$-smooth bounded convex domain is $\mathcal C^{\alpha}$ for any $\alpha<1.$ Note that if we knew that there is a left inverse for $f$ of class $\mathcal C^{\alpha}$ we would be able to formulate Burns-Krantz theorem for $D.$
 
 Of course, if $\tilde f$ were of $\mathcal C^1$ class, where $\tilde f$ is the dual map to $f$, then making use of the implicit function theorem and the equality $f'\bullet \tilde f=1$, we would easily find a $\mathcal C^1$-smooth left inverse. Note that such a claim was stated in \cite{Kra}, however we do not understand its proof.
\end{remark}

\section{Examples}\label{examples}
As mentioned above we have the following
\begin{example}
 Let $0\leq r_-<r_+<\infty$ and $A:=\{z\in \mathbb C: r_-<|z|<r_+\}/$. Then $$c_A(z,w)<l_A(z,w)=k_A(z,w)$$ for any $(z,w)\in A\times A.$
 
 Similarly, making use of the product property of invariant metrics we get that $$c_{A^n}(z,w)<l_{A^n}(z,w)=k_{A^n}(z,w),\quad z,w\in A^n.$$
\end{example}

\begin{example}
Let $D_{\alpha}=\{(z_1,z_2)\in \mathbb C^2:\ |z_1|<1,\ |z_2|<1,\ |z_1z_2|<\alpha\},$ where $\alpha\in (0,1].$ It follows from \cite{Klis} that any extremal mapping of $D_{\alpha}$ intersects one of the axes. Therefore, putting $D:=D_{\alpha}\cap \mathbb C^2_*$ we get $$c_D(z,w)<l_D(z,w)=k_D(z,w),\quad (z,w)\in D^2.$$
\end{example}

Of course, taking a sequence $(D_\nu)$ of strongly pseudoconvex domains exhausting $D_{\alpha}$ we see that $c_{D_\nu}\neq k_{D_\nu}$.

The author does not know whether the similar property holds for complete Reinhardt domains different then $D_{\alpha},$ $\alpha\in (0,1].$ Note that it may be shown (see Theorem... in \cite{Edi-Kos-Zwo}) that in any Reinhardt domain not euqal to $D_{\alpha}$ for any $\alpha\in (0,1]$ there is an extremal mapping omitting the axes.

Following this direction, note that if $D$ is a complete Reinhardt domain smooth near $z_0\in \partial D$ and $z_0$ is a point of strict pseudoconvexity, then it follows from our results and Fornaess' Theorem that the equality $c_D=l_D$ holds on some open, non-empty subset of $D^2.$

\section{Technical Lemmas}\label{3}

Recall that a complex geodesic of a strictly pseudoconvex domain is $\mathcal C^{1/2}$ continuous and speaking very generally its $\mathcal C^{1/2}$ depends only on the curvatures of the topological boundary of a domain, its diameter and the distance between $f(0)$ and $\partial D.$ See \cite{Lem2} for details (the results presented there are formulated for extremals in strictly pseudoconvex domains but their proofs work for complex geodesics in arbitrary strictly pseudoconvex domains - see also a detailed discussion on the Lempert's paper - \cite{KW}).

\begin{lemma}\label{c12} 
Assume that $D\subset \subset \mathbb B_n$ is a bounded $\mathcal C^2$ smooth strongly pseudoconvex domain of $\mathbb C^n$, $0\in D$. Let $r\in \mathcal C^2(\mathbb B_n)$ be a defining function of $D$ such that $\mathcal L r(a,X)\geq \alpha||X||^2,$ $a\in \partial D,$ $X\in \mathbb C^n$ for some $\alpha>0.$ Let $(r_\mu)\subset \mathcal C^2(\mathbb B_n)$ be a sequence converging to $r$ in a $\mathcal C^2$ topology on $\mathbb B_n$. By $D_\mu$ we denote a connected component of $\{z\in \mathbb B_n:\ r_\mu(z)<0\}$ containing the origin. Fix a compact subset $K$ of $D$.

Then every $l_{D_\mu}$-geodesic $f$ such that $f(0)\in K$ is $\mathcal C^{1/2}$ continuous and its $\mathcal C^{1/2}$ may be estimated by a constant independent of $\mu$ for $\mu$ big enough.
\end{lemma}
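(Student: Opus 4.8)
The plan is to reduce Lemma~\ref{c12} to the known regularity theory for complex geodesics of a single strictly pseudoconvex domain by controlling all the relevant geometric quantities uniformly in $\mu$. Recall the quoted fact (from \cite{Lem2}, cf. \cite{KW}): a complex geodesic $f$ of a strictly pseudoconvex domain $G$ is $\mathcal C^{1/2}$ up to $\overline{\DD}$, with a $\mathcal C^{1/2}$-bound depending only on (i) the diameter of $G$, (ii) a lower bound $\alpha>0$ for the Levi form of a defining function on $\partial G$, (iii) an upper bound on the $\mathcal C^2$-norm of that defining function, and (iv) a lower bound on $\dist(f(0),\partial G)$. So the whole task is to produce such uniform bounds for the family $\{D_\mu\}$ and for geodesics with $f(0)\in K$.

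First I would record that since $r_\mu\to r$ in $\mathcal C^2(\mathbb B_n)$ and $\mathcal L r(a,X)\ge \alpha\|X\|^2$ on $\partial D$, for $\mu$ large the defining function $r_\mu$ satisfies $\mathcal L r_\mu(a,X)\ge (\alpha/2)\|X\|^2$ in a fixed neighborhood of $\overline D$ (hence on $\overline{D_\mu}$, once we know $D_\mu$ sits in that neighborhood), and $\|r_\mu\|_{\mathcal C^2}$ is bounded by $\|r\|_{\mathcal C^2}+1$; also $\nabla r_\mu$ is bounded away from $0$ near $\partial D$, so $0$ is a regular value and $D_\mu$ is a genuine $\mathcal C^2$ strictly pseudoconvex domain. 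Next, since $r_\mu\to r$ uniformly and $r<0$ on $K\Subset D$, there is $\delta>0$ with $r_\mu<-\delta$ on $K$ for all large $\mu$; combined with the uniform gradient bound this gives a uniform lower bound $\dist(K,\partial D_\mu)\ge c_0>0$, so any $l_{D_\mu}$-geodesic $f$ with $f(0)\in K$ has $\dist(f(0),\partial D_\mu)\ge c_0$. Finally $D_\mu\subset\mathbb B_n$ gives the uniform diameter bound $2$. Feeding these four uniform inputs into the quoted estimate yields the desired conclusion.

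The one point that needs a little care — and which I'd flag as the main obstacle — is the claim that the $\mathcal C^2$-convergence of the \emph{defining functions} translates into the geometric convergence of the domains that the regularity estimate really uses: namely that $\overline{D_\mu}$ is eventually contained in a fixed compact neighborhood $N$ of $\overline D$ on which all the curvature and gradient bounds above are valid, and that the boundaries $\partial D_\mu$ converge to $\partial D$ in $\mathcal C^2$. For the containment, note $\{r_\mu<0\}\cap\mathbb B_n\subset \{r<\eta_\mu\}$ with $\eta_\mu:=\|r_\mu-r\|_{\infty}\to 0$, and $\{r<\eta\}\cap\mathbb B_n$ shrinks to $\overline D$ as $\eta\downarrow 0$ because $\partial D$ is a level set with non-vanishing gradient; the connected component through $0$ therefore lands in $N$ for large $\mu$. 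The $\mathcal C^2$ convergence of $\partial D_\mu$ then follows by writing both boundaries as graphs over $\partial D$ via the normal exponential map and applying the implicit function theorem with $\mathcal C^2$-dependence on $r_\mu$. Once this is in place, every constant in the Lempert estimate is controlled uniformly, and the proof is complete.

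I would remark that no new idea beyond \cite{Lem2} is required here: the content of the lemma is purely the \emph{uniformity} of Lempert's regularity bounds under $\mathcal C^2$-perturbation of the defining function, which is exactly what the estimates in that paper deliver once one checks — as above — that diameter, Levi-form lower bound, $\mathcal C^2$-norm of the defining function, and $\dist(f(0),\partial D_\mu)$ are all bounded independently of $\mu$.
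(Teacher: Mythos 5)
Your proposal is correct and follows essentially the same route as the paper: the paper's (very brief) sketch likewise reduces the statement to Lempert's uniform $\mathcal C^{1/2}$ estimates (Propositions 7 and 8 of \cite{Lem1}) by checking that $D$ and the $D_\mu$ belong to a single family $\mathcal D(c)$ of domains with uniform geometric bounds, which is precisely the list of uniform quantities (diameter, Levi-form lower bound, $\mathcal C^2$-norm of the defining function, $\dist(f(0),\partial D_\mu)$) that you verify. The only point the paper flags that you leave implicit is that the cited propositions are stated for $E$-mappings and one must observe that their proofs go through verbatim for geodesics; your filling-in of the domain-convergence details, which the paper delegates to \cite{KW}, is a welcome addition.
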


\begin{proof}[Sketch of proof of Lemma~\ref{c12}]
Is in \cite{KW} we have very easily shown that there is $c>0$ such that $D$ and $D\mu$ are in $\mathcal D(c)$ for $\mu>>1,$ where $\mathcal D(c)$ is the family defined in \cite{Lem1}. Thus it suffices to observe that Propositions~7 and 8 of \cite{Lem1} work for $\mathcal C^2$ smooth domains when we replace the assumption of being an $E$-mapping by the assumption of being a geodesic (the proofs given there may be taken over verbatim).
\end{proof}

\begin{remark}
It is worth of mentioning that using the argument of \cite{Chl}, which also works in a $\mathcal C^2$-smooth domain strictly pseudoconvex domain and keeping the notation from lemma above one may get that any $l_{D_\mu}$-geodesic such that has up to a composition with a M\"obius map $\mathcal C^{1/4}$ norm bounded by a constant independent of $\mu.$
\end{remark}

Note that the assumptions of the lemma presented above imply that $\{r_\mu<0\}$ is connected for $\mu>>1$. Moreover, using some simple calculus one may show that $\delta_{D_\mu}$ converges to $\delta_D$ in $\mathcal C^2$ topology, where $\delta_G$ denotes the signed distance to the bounded domain $G\subset \mathbb C^n,$ i.e. $$\delta_G(z):=\begin{cases}-\dist(z, \partial G),\quad \text{if}\ z\in \overline G,\\ \dist(z, \partial G),\quad \text{if}\ z\not\in \overline G.\end{cases} $$

\begin{remark}
We would like to recall once again that the lemmas above may also formulated and proved in $\mathcal C^{1,1}$ case (then the condition on curvatures should be naturally replaced by exterior and interior ball condition - see \cite{KW} for these ideas.
\end{remark}

If $D$ is a bounded $\mathcal C^2$ smooth strictly convex domain then for every $z\in D$ and $X\in \mathbb S_{n-1}:=\partial \mathbb B_n$ there is a unique $\mathcal C^{\alpha}-$smooth, $\alpha<0$, geodesic in $D$ for $(z,X)$, denoted by $f_{z,X}$ such that $f_{z,X}(0)=z$ and $f_{z,X}'(0)=\lambda_{z,X} X$ for some $\lambda_{z,X}>0.$

\begin{lemma}\label{close} Let $D$ be a $\mathcal C^2$ smooth strictly convex domain and let $r_0$ be its defining function given on a neighborhood $V$ of $\overline D.$ Let $K$ be a compact subset of $D.$ For $r\in \mathcal C^2(V)$ sufficiently close to $r_0$ in $\mathcal C^2$ topology on $V$ let $D_r$ denote a connected component of $\{x\in V:\ r(x)<0\}$ containing $0$ (note that $D^r$ is strictly convex providing that $r$ is sufficiently close to $r_0$). Let $f^r_{z,X}$ and $f_{z,X}$ be complex geodesic of $D_r$ and $D$ respectively such that $f_{z,X}^r(0)=f_{z,X}(0)=z$, $(f_{z,X}^r)'(0)=\lambda_{z,X}^r X$ and $f_{z,X}'(0)=\lambda_{z,X} X$ for some $\lambda_{z,X}^r, \lambda_{z,X}>0,$ where $z\in K$ and $X\in\mathbb S_{n-1}.$

Then $||f_{z,X}^r-f_{z,X}||_{\infty}\to 0$ as $r\to r_0$ in $\mathcal C^2$ topology on $V.$

\end{lemma}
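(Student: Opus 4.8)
The plan is to argue by contradiction using normal families, exploiting the uniform $\mathcal{C}^{1/2}$ estimate for complex geodesics in a $\mathcal{C}^2$-bounded family of strictly convex domains together with the uniqueness of the complex geodesic in a strictly convex domain for a given contact point and direction. Suppose the conclusion fails. Then there exist $\eps_0>0$, a sequence $r_\mu\to r_0$ in $\mathcal{C}^2(V)$, points $z_\mu\in K$ and directions $X_\mu\in\mathbb{S}_{n-1}$ such that $\|f^{r_\mu}_{z_\mu,X_\mu}-f_{z_\mu,X_\mu}\|_\infty\geq\eps_0$. Passing to a subsequence, $z_\mu\to z_*\in K\subset D$ and $X_\mu\to X_*\in\mathbb{S}_{n-1}$; also $D_{r_\mu}$ converges to $D$ in the $\mathcal{C}^2$ sense, so for $\mu$ large the $D_{r_\mu}$ all lie in a common family $\mathcal{D}(c)$ in the sense of Lemma~\ref{c12}, and in particular there is a uniform lower bound $\dist(z_\mu,\partial D_{r_\mu})\geq 1/c>0$. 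Hence by Lemma~\ref{c12} (applied to geodesics, in the version recorded there) the maps $f^{r_\mu}:=f^{r_\mu}_{z_\mu,X_\mu}$ are uniformly $\mathcal{C}^{1/2}$ on $\CDD$, so by Arzel\`a--Ascoli a further subsequence converges uniformly on $\CDD$ to some $g\in\mathcal{C}^{1/2}(\CDD)$, holomorphic on $\DD$, with $g(0)=z_*\in D$, hence $g(\DD)\subset D$.

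Next I would identify the limit $g$ with $f_{z_*,X_*}$. First, $g$ is a complex geodesic of $D$: since $c_{D_{r_\mu}}=l_{D_{r_\mu}}$ on the strictly convex domains $D_{r_\mu}$ and these invariant functions converge to those of $D$ (monotonicity/continuity of $c$ and $l$ under $\mathcal{C}^2$-convergence of increasingly-exhausting strictly convex domains, exactly as used in the proof of Lemma~\ref{lem:loc}), a standard limiting argument shows $g$ realizes $c_D$ between any two of its values, i.e. $g$ is a complex geodesic. Second, the normalization passes to the limit: $(f^{r_\mu})'(0)=\lambda^{r_\mu}_{z_\mu,X_\mu}X_\mu$ with $\lambda^{r_\mu}_{z_\mu,X_\mu}>0$, and by the Cauchy estimates the derivatives converge, $(f^{r_\mu})'(0)\to g'(0)$; the coefficients $\lambda^{r_\mu}_{z_\mu,X_\mu}$ stay in a compact subinterval of $(0,\infty)$ (bounded above by $\mathcal{C}^{1/2}$-control, bounded below away from $0$ because $g'(0)\neq 0$ as $g$ is a nonconstant geodesic), so $\lambda^{r_\mu}_{z_\mu,X_\mu}\to\lambda_*>0$ and $g'(0)=\lambda_* X_*$. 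Thus $g$ is a complex geodesic of $D$ through $z_*$ in direction $X_*$ with positive-multiple normalization. By the uniqueness statement recorded just before the lemma, $g=f_{z_*,X_*}$.

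Finally, I would also note that $f_{z_\mu,X_\mu}\to f_{z_*,X_*}$ uniformly on $\CDD$ as $\mu\to\infty$: this is the same argument with $r_\mu\equiv r_0$, using the $\mathcal{C}^{1/2}$ bound for geodesics in the single domain $D$ and uniqueness again (i.e. continuous dependence of $f_{z,X}$ on $(z,X)\in K\times\mathbb{S}_{n-1}$). Combining, $\|f^{r_\mu}_{z_\mu,X_\mu}-f_{z_\mu,X_\mu}\|_\infty\leq\|f^{r_\mu}_{z_\mu,X_\mu}-f_{z_*,X_*}\|_\infty+\|f_{z_*,X_*}-f_{z_\mu,X_\mu}\|_\infty\to 0$, contradicting $\geq\eps_0$, and the lemma follows.

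I expect the main obstacle to be the bookkeeping that makes the uniform $\mathcal{C}^{1/2}$ estimate genuinely uniform along the sequence $r_\mu$: one must check that the relevant geometric quantities (curvature bounds of $\partial D_{r_\mu}$, diameters, and the distance $\dist(z_\mu,\partial D_{r_\mu})$) are controlled uniformly in $\mu$, so that Lemma~\ref{c12} applies with a $\mu$-independent constant; this is precisely where the $\mathcal{C}^2$-convergence $r_\mu\to r_0$ and the compactness of $K$ enter, and it is the step one has to state carefully rather than the soft normal-families/uniqueness part that closes the argument.
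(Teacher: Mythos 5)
Your argument is correct and is exactly the route the paper takes: its proof of Lemma~\ref{close} is the single sentence that the claim follows from uniqueness of complex geodesics in $D$, Lemma~\ref{c12}, and a standard compactness argument, which is precisely the contradiction/normal-families scheme you spell out. Your version simply makes explicit the details (uniform $\mathcal C^{1/2}$ bounds, passage of the normalization to the limit, identification of the limit with $f_{z_*,X_*}$) that the paper leaves to the reader.
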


\begin{proof} This is just a simply consequence of the uniqueness of complex geodesics in $D,$ Lemma~\ref{c12} and a standard compactness argument.

\end{proof}

\end{document}